  \newcolumntype{d}{D{.}{.}{-1}}
\newtheorem{theorem}{Theorem}
\newtheorem{proposition}{Proposition}
\newtheorem{lemma}{Lemma}
\newtheorem{remark}{Remark}
\newtheorem{definition}{Definition}
\newenvironment{proof}{{\it Proof. }}{\hfill $\Box$}
\newcommand{\Real}{\mathbb R}
\newcommand{\real}[1]{{\mathbb R}^{#1}}
\newcommand{\bff}{{\boldsymbol f}}
\newcommand{\bx}{\boldsymbol x}
\newcommand{\bA}{{\boldsymbol A}}
\newcommand{\bpsi}{{\mbox{\boldmath $\psi$}}}
\author{ %
I. M. Ross\thanks{Distinguished Professor and Program Director, Control and Optimization 
} \\
\textit{Naval Postgraduate School, Monterey, CA 93943}
}
\title{A Direct Shooting Method is Equivalent to an Indirect  Method}
\abstract{
We show that a direct shooting method is mathematically equivalent to an indirect method in the sense of certain first-order conditions.  Specific mathematical formulas pertaining to the equivalence of a direct shooting method with an indirect method are derived. We also show that a theoretical equivalence does not necessarily translate to practical equivalence if the parameterized optimal control problem is simply patched to a nonlinear programming solver. A mathematical explanation is provided for the successes and failures of such patched nonlinear programming methods. In order to generate the correct solution more consistently, the nonlinear programming solver used in a traditional direct method must be replaced or augmented by a Hamiltonian programming method.  The theoretical results derived in this paper further strengthen the connections between computational optimal control, deep learning and automatic differentiation.
}
\begin{document}
\maketitle

\section{Introduction}

Starting from first principles, we show that a direct shooting method is mathematically equivalent to an indirect method. The mathematical equivalence is established in the sense of certain first-order conditions.  Given this equivalence, we revisit some widely-held beliefs such as the folklore that an ``indirect method is more accurate than a direct method.''  In fact, we show that the apparent differences between direct and indirect methods can be mathematically reconciled by realizing that a direct method masks its indirect elements.  By unmasking the indirect elements of a direct shooting method, we present new results on convergence and accuracy.
The new results provide mathematical formulas that help explain the historical direct-indirect chasm while simultaneously generating several prescriptions for closing the apparent gap.

The main contribution of this paper is a recalibration of certain entrenched concepts in trajectory optimization methods.  In particular, our theoretical results provide a mathematical explanation for the successes and failures of methods that patch discretized optimal control problems to generic nonlinear programming techniques.  Far superior performance capabilities can he harnessed if the generic nonlinear programming methods are replaced by mathematical programming techniques that are centered around the covector mapping principle\cite{ross-book}.  A first-principles' approach combined with a new theory on optimization\cite{rossJCAM-1} is used to present the main ideas.

\section{A Signature Problem For Analysis}
We consider the following ``signature'' trajectory optimization problem,
\begin{eqnarray}
& (\textsf{$P$}) \left\{
\begin{array}{lrl}
\emph{Minimize } & J[x(\cdot), u(\cdot)] :=& E\big(x(t_f)\big)\\
\emph{Subject to}& \dot x(t) =& f(x(t), u(t))  \\
& (x_0, t_0, t_f)  = & (x^0, t^0, t^f)
\end{array} \right. & \label{eq:probP}
\end{eqnarray}
It turns out that this problem is also a fundamental problem in deep learning\cite{DL=OCP}.
In Problem $(P)$, the decision variables are the state and control trajectories denoted by $x(\cdot)$ and $u(\cdot)$ respectively.  The pair $[x(\cdot), u(\cdot)]$ is the system trajectory. We assume one-dimensional state and control variables; i.e., $x \in \Real$ and $u \in \Real$. The one-dimensional assumption allows us to eliminate the unnecessary mathematics of cumbersome bookkeeping associated with vectors.  While such bookkeeping is indeed necessary for practical problems, it is quite unnecessary to understand the basic principles of trajectory optimization methods\cite{birk-TN}.

Given a system trajectory, the cost functional $J$ in Problem $(P)$ is computed via the endpoint cost function $E: x(t_f) \mapsto \Real$.  For a system trajectory to be feasible, it must satisfy the nonlinear dynamical equations $\dot x(t) = f(x(t), u(t))$ for all time $ t \in [t^0, t^f]$, where $t^0$ and $t^f$ are specified clock times.  The initial state $x(t_0)$ is fixed at $x^0$ while the final state $x(t_f)$ is free. The problem is then to find a system trajectory that minimizes the cost functional.

Suppose a candidate optimal trajectory is computed by a direct method. If this trajectory is claimed to be optimal, it must satisfy the necessary conditions for optimality.  \textbf{\emph{This argument is worth emphasizing because necessary conditions are not optional; i.e., they are indeed necessary, a point that is sometimes lost when optimality claims are made on solutions computed by classical direct methods}}.

The necessary conditions for Problem $(P$) are easily stated in terms of the Pontryagin Hamiltonian\cite{ross-book} defined by,
\begin{equation}\label{eq:H:=}
H(\lambda, x, u):= \lambda f(x,u)
\end{equation}
where, $\lambda \in \Real$ is a costate variable that satisfies the adjoint equation,
\begin{equation}
-\dot\lambda(t) = \partial_x H(\lambda(t), x(t), u(t)) = \lambda(t) \partial_x f(x(t), u(t))
\end{equation}
and the transversality condition,
\begin{equation}
\lambda(t_f) = \partial_x E(x(t_f))
\end{equation}
By assumption $u$ is unconstrained; hence, at each point along a candidate optimal solution, the first-order Hamiltonian minimization condition,
\begin{equation}\label{eq:HMC-FO}
\partial_u H(\lambda(t), x(t), u(t)) = 0
\end{equation}
must be satisfied.  Collecting all relevant equations, we can state the following: If a system trajectory is claimed to be optimal, it must satisfy the following first-order necessary conditions,
\begin{eqnarray}
& (\textsf{$P^\lambda$}) \left\{
\begin{array}{lrl}
& \dot x(t) =& f(x(t), u(t))  \\
&-\dot\lambda(t) = & \lambda(t)\partial_x f(x(t), u(t))\\
& \partial_u H(\lambda(t), x(t), u(t)) = & 0 \\
& (x_0, t_0, t_f)  = & (x^0, t^0, t^f)\\
&\lambda(t_f) = &\partial_x E(x(t_f))
\end{array} \right. & \label{eq:probPpsi}
\end{eqnarray}
\begin{remark}
Strictly speaking, $(P^\lambda)$ is not a definition of a problem; it is simply a statement of the necessary conditions.  If  $(P^\lambda)$ is reframed in terms of finding a costate trajectory $\lambda(\cdot)$ in addition to finding a system trajectory, then it may be viewed as defining a differential-algebraic boundary value problem.
\end{remark}
Conventionally, a direct method is framed in terms of solving $(P)$ and an indirect method as solving $(P^\lambda)$ \cite{vonStryk-survey,betts-survey}.  To reexamine this statement more carefully, we first develop discretizations of $(P)$ and $(P^\lambda)$.

\section{Basic Discretizations for Optimization Analysis}
We use forward/backward Euler discretizations to reveal certain fundamental outcomes in optimization.  It will be apparent shortly that the main aspects of our analysis is portable across alternative discretizations (e.g., higher-order Runge-Kutta methods) because our use of Euler methods is centered at a fundamental mathematical level. Euler-specific aspects are noted so that a reader can independently gauge the generality of specific results and/or statements. To this end, consider a uniform grid of ``diameter'' $h$ over the time interval $[t^0, t^f]$ defined by
\begin{equation}\label{eq:grid}
h := \frac{t^f - t^0}{N},  \quad N \in \mathbb{N}^+
\end{equation}
Time is indexed according to $t^0 = t_0 < t_1:= t_0 + h < t_2 := t_1+ h < \cdots < t_N = t^f$.
Discretized versions of continuous-time variables over this grid will be denoted by the same indexing scheme. For instance, a discretized state trajectory over \eqref{eq:grid} will be denoted as $ (x_0, x_1, \ldots, x_{N-1}, x_N)  $.
Replacing $\dot x(t)$ by its forward difference approximation results in the following algebraic equations as candidate approximations for the dynamics:
\begin{equation}\label{eq:FE4xdot=f}
\begin{aligned}
x_1 &= x^0 + h f(x^0, u_0) \\
 & \vdots \\
x_{N-1} & = x_{N-2} + h f(x_{N-2}, u_{N-2})\\
x_N & = x_{N-1} + h f(x_{N-1}, u_{N-1})
\end{aligned}
\end{equation}
Thus, a forward Euler discretization of Problem $(P)$ may be expressed as,
\begin{eqnarray}
& (\textsf{$P^{N}$}) \left\{
\begin{array}{lll}
\emph{Minimize } & J^N(X^N, U^N) := E(x_N)\\
\emph{Subject to}& x_1 = x^0 + h f(x^0, u_0) \\
& x_2  = x_1 + h f(x_1, u_1) \\
&\quad  \vdots \\
&x_N  = x_{N-1} + h f(x_{N-1}, u_{N-1})
\end{array} \right. & \label{eq:probPN}
\end{eqnarray}
where, $X^N \in \real{N}$ and $U^N \in \real{N}$ are defined by,
$$ X^N := (x_1, x_2, \ldots, x_N), \quad U^N := (u_0, u_1, \ldots, u_{N-1}) $$
\begin{remark}\label{rem:nouN}
There is no ``$u_N$'' in Problem ($P^N$) because of our use of a forward Euler method (Cf.~\eqref{eq:FE4xdot=f}).  The absence of $u_N$ is simply a characteristic of Radau-type discretizations.
\end{remark}

With foresight in mind, we discretize the adjoint equation by a backward Euler formula,
\begin{equation}\label{eq:adjoint-disc}
\begin{aligned}
\lambda_0 & = \lambda_1 + h \lambda_1 \partial_x f(x_1, u_1)\\
 & \vdots \\
\lambda_{N-2} &= \lambda_{N-1} + h \lambda_{N-1} \partial_x f(x_{N-1}, u_{N-1})
\end{aligned}
\end{equation}
In addition, we discretize \eqref{eq:HMC-FO} according to,
\begin{equation}\label{eq:gradH-disc}
\begin{aligned}
 \partial_u H(\lambda_0, x_0, u_0) & = 0 \\
 & \vdots \\
 \partial_u H(\lambda_{N-1}, x_{N-1}, u_{N-1}) & = 0
\end{aligned}
\end{equation}
Setting
\begin{equation}\label{eq:tvc-psi-disc}
\lambda_{N-1} = \partial_x E(x_N)
\end{equation}
a forward/backward Euler discretization of $(P^\lambda)$ may be written as,
\begin{eqnarray}
& (\textsf{$P^{\lambda N}$}) \left\{
\begin{array}{lrl}
& x_1 &= x^0 + h f(x^0, u_0) \\
& & \vdots \\
&x_N & = x_{N-1} + h f(x_{N-1}, u_{N-1})\\[1em]
&\lambda_0 & = \lambda_1 + h  \lambda_1 \partial_x f(x_1, u_1)\\
& & \vdots \\
&\lambda_{N-2} &= \lambda_{N-1} + h \lambda_{N-1} \partial_x f(x_{N-1}, u_{N-1})\\[1em]
&\lambda_{N-1} &= \partial_x E(x_N)\\[1em]
& 0 &= \partial_u H(\lambda_0, x_0, u_0) \\
& & \vdots \\
& 0 & = \partial_u H(\lambda_{N-1}, x_{N-1}, u_{N-1})
\end{array} \right. & \label{eq:probPpsiN}
\end{eqnarray}

\begin{remark}
The adjoint equation over the last interval $(t_N - t_{N-1})$ is missing in \eqref{eq:adjoint-disc} because its inclusion would require $u_N$, which does not exist; see Remark \ref{rem:nouN}.  Consequently, there is no $\lambda_N$ in ($P^{\lambda N}$).  Nonetheless, we still have an $N$-dimensional discretized dual variable given by $(\lambda_0, \lambda_1, \ldots, \lambda_{N-1}) $.
\end{remark}

In a conventional direct method, the objective is to solve Problem~$(P^N)$.  This optimization problem has $2N$ variables and $N$ constraints. A conventional indirect method involves solving the system of equations given by $(P^{\lambda N})$. This system has $3N$ variables and $3N$ constraints.  Different discretizations generate different versions of $(P^N)$ and $(P^{\lambda N})$.

\section{Development of the Main Proposition}
A direct shooting method attempts to solve a trajectory optimization problem using two main components: a simulation function and an optimization module\cite{betts-survey}.  The task of the simulation function is to eliminate the dynamics and provide the necessary data functions for the optimization module.  A basic direct shooting method in terms of solving $(P^N)$ can be described as follows:
\begin{enumerate}
\item For any given $U^N = (u_0, u_1, \ldots, u_{N-1})$, generate $x_N$ using the forward Euler method given by \eqref{eq:FE4xdot=f}.  This simulation function generates $x_N$ as a function of $U^N$.  By an abuse of notation, we denote this function as $x_N(U^N)$.
\item Starting with $i=0$, generate the next iterate $U^N_{i+1}$ according to the formula,
\begin{equation}\label{eq:family-disc}
U^N_{i+1} = U^N_i - \alpha_i\ M^{-1}\frac{\partial E^N}{\partial U^N_i}, \quad i = 0, 1, \ldots
\end{equation}
where $ i =0, 1, \ldots $ is the index of the iteration, $\alpha_i > 0$ is the step-length of the algorithm, $E^N$ is $E(x_N(U^N))$ and $M$ determines at least three algorithmic options given by the following choices\cite{GMSW,rossJCAM-1},
\begin{equation}\label{eq:3algs}
\begin{aligned}
M & = \text{ Identity matrix } & \text{(gradient method)}\\
M & = \text{ Hessian } & \text{(Newton's method)}\\
M & = \text{ Approximate Hessian } &\text{(quasi-Newton method)}
\end{aligned}
\end{equation}

\end{enumerate}
The dependence of $M$ on $U^N_i$ and $N$ is suppressed throughout this paper for enhancing the clarity of exposition of the main ideas.

Step 2 of the basic direct shooting method, encapsulated by \eqref{eq:family-disc}, is a more granular expression of the problem,
\begin{equation}
    (Q^{N}) \big\{  \mathop{\emph{Minimize }}_{U^N} E\big(x_N(U^N)\big) \label{eq:probQN}
\end{equation}
That is, Problem $(Q^N)$ is a transformation of $(P^N)$ generated as a result of Step~1, while \eqref{eq:family-disc} and \eqref{eq:3algs} are simply providing some pertinent details in solving \eqref{eq:probQN}.

Frequently, a direct shooting method is described more abstractly as follows: Parameterize the controls in terms of known functions $\xi_1(t), \xi_2(t), \ldots $ and unknown coefficients $c_1, c_2, \ldots$.  This can be accomplished in several different ways.  A representative parametrization is given by,
\begin{equation}\label{eq:u=sumcj}
u(t; C) = \sum_j c_j\xi_j(t)
\end{equation}
where, $C = (c_1, c_2, \ldots)$. Then, a more generic direct shooting method may be described as follows:
\begin{enumerate}
\item For any given $C$, generate $x(t_f)$ by simulating the differential equation $\dot x = f(x, u)$.  In terms of Step~1 of the basic direct shooting method described earlier, this process generates ``$x_N$'' as a function of $C$. Denote this function as $x_N(U^N(C))$.
\item With the dynamic constraints in Problem $(P^N)$ eliminated by the simulation function of Step~1, solve the resulting optimization problem given by,
\begin{equation}
    (Q^{N}_C) \big\{  \mathop{\emph{Minimize }}_{C} E\big(x_N(U^N(C))\big) \label{eq:probQNC}
\end{equation}
\end{enumerate}
If \eqref{eq:probQNC} is implemented in a manner analogous to \eqref{eq:family-disc}, it generates the family of algorithms parameterized by $M$ according to,
\begin{equation}\label{eq:family-disc-r}
C_{i+1} = C_i - \alpha_i\ M^{-1}\frac{\partial E^N}{\partial C_i}, \quad i = 0, 1, \ldots
\end{equation}

It is evident that the computation of the gradient of $E^N$ is central to all variants of the direct shooting methods discussed in the preceding paragraphs.  More specifically, \eqref{eq:family-disc} requires the computation of the gradient of $E^N$ with respect to $U^N$ while \eqref{eq:family-disc-r} requires this same information albeit implicitly because
\begin{equation}\label{eq:gradEwrtC}
\frac{\partial E^N}{\partial C_i} = \left[\frac{\partial U^N_i}{\partial C_i}\right]^T \frac{\partial E^N}{\partial U^N_i}
\end{equation}
%
\begin{remark}\label{rem:EN-notation}
To avoid an excessive use of new symbols, we have used the notation $E^N$ in \eqref{eq:family-disc} to imply the composite function $E \circ x_N: U^N \mapsto \Real$ in \eqref{eq:probQN}.  By the same token, $E^N$ in \eqref{eq:family-disc-r} is a different composite function given by $E \circ x_N \circ U^N: C \mapsto \Real$.
\end{remark}
\begin{remark}
The versatile and popular Program to Simulate Optimize Simulated Trajectories II (POST2)\cite{POST2ref} uses a direct shooting method with controls parameterized by various known functions and unknown coefficients analogous to \eqref{eq:u=sumcj}.  Step~2 in POST2 (i.e., the equivalent of solving Problem $(Q^N_C)$) is implemented in terms of a gradient or a quasi-Newton method\cite{POST2ref}.
\end{remark}

\begin{lemma} Let $E^N$ denote the function $E\circ x_N: U^N \mapsto \Real$.  Then, for $k = 0, 1, \ldots (N-1)$,
\begin{equation}
\frac{\partial E^N}{\partial u_k} = h \lambda_k \partial_u f(x_k, u_k)
\end{equation}
\end{lemma}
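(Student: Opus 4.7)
The plan is to compute $\partial E^N/\partial u_k$ directly via the chain rule applied to the forward Euler recursion \eqref{eq:FE4xdot=f}, and then recognize the resulting product as the closed form of the backward Euler adjoint recursion \eqref{eq:adjoint-disc} with terminal condition \eqref{eq:tvc-psi-disc}. Concretely, since $E^N = E\circ x_N$, the chain rule gives
\begin{equation*}
\frac{\partial E^N}{\partial u_k} = \partial_x E(x_N)\,\frac{\partial x_N}{\partial u_k},
\end{equation*}
so the whole task reduces to computing $\partial x_N/\partial u_k$ for $k\in\{0,1,\ldots,N-1\}$.

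Next, I would observe that in the forward Euler recursion $x_{j+1}=x_j + h f(x_j,u_j)$, the control $u_k$ enters only at step $k$, so $x_k$ is independent of $u_k$ and we have the direct sensitivity $\partial x_{k+1}/\partial u_k = h\,\partial_u f(x_k,u_k)$. For $j>k$, $x_{j+1}$ depends on $u_k$ solely through $x_j$, with $\partial x_{j+1}/\partial x_j = 1 + h\,\partial_x f(x_j,u_j)$. Telescoping along the chain from index $k+1$ up to $N$ yields
\begin{equation*}
\frac{\partial x_N}{\partial u_k} = \left[\prod_{j=k+1}^{N-1}\bigl(1 + h\,\partial_x f(x_j,u_j)\bigr)\right] h\,\partial_u f(x_k,u_k),
\end{equation*}
with the convention that the empty product (at $k=N-1$) equals $1$.

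The final step is to identify the bracketed product, multiplied by $\partial_x E(x_N)$, as precisely $\lambda_k$. Starting from \eqref{eq:tvc-psi-disc}, namely $\lambda_{N-1}=\partial_x E(x_N)$, and iterating the backward Euler relation \eqref{eq:adjoint-disc} rewritten as $\lambda_{j-1} = \lambda_j\bigl(1+h\,\partial_x f(x_j,u_j)\bigr)$, an easy induction on the number of backward steps gives
\begin{equation*}
\lambda_k = \partial_x E(x_N)\prod_{j=k+1}^{N-1}\bigl(1 + h\,\partial_x f(x_j,u_j)\bigr),\quad k=0,1,\ldots,N-1.
\end{equation*}
Substituting into the chain rule expression for $\partial E^N/\partial u_k$ and using $\partial_u H = \lambda\,\partial_u f$ from \eqref{eq:H:=} immediately produces the claimed formula.

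I do not expect any serious obstacle; the argument is essentially bookkeeping of a telescoping product. The only care needed is the boundary case $k=N-1$ (empty product), which must line up with the terminal condition $\lambda_{N-1}=\partial_x E(x_N)$ so that $\partial E^N/\partial u_{N-1}=h\lambda_{N-1}\partial_u f(x_{N-1},u_{N-1})$ falls out as a genuine instance of the general formula rather than a separate special case. After that, the claim follows without further analysis.
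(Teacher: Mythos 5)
Your proof is correct and takes essentially the same route as the paper: apply the chain rule $\partial E^N/\partial u_k = \partial_x E(x_N)\,\partial x_N/\partial u_k$, propagate the sensitivity through the forward Euler recursion \eqref{eq:FE4xdot=f}, and identify the accumulated factor with the costate generated by the backward recursion \eqref{eq:adjoint-disc} and the terminal condition \eqref{eq:tvc-psi-disc}. The only difference is presentational: you state the general $k$ via an explicit telescoping product and a short induction on $\lambda_k$, thereby making rigorous the step the paper leaves as ``the cases $k = N-3,\ldots,0$ follow similarly'' after verifying $k=N-1$ and $k=N-2$.
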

\begin{proof}
By chain rule, we have,
\begin{align}
\frac{\partial E^N}{\partial u_k}  &= \frac{\partial E}{\partial x_N} \frac{\partial x_N}{\partial u_k} \nonumber \\
    & = \lambda_{N-1} \frac{\partial x_N}{\partial u_k} \label{eq:1inLemma1}
\end{align}
where the last equality in \eqref{eq:1inLemma1} follows from \eqref{eq:tvc-psi-disc}. The remainder of the proof is broken down to two cases:

\underline{Case $k=N-1$}

From \eqref{eq:FE4xdot=f} it follows that,
\begin{equation}\label{eq:lastGradproof}
\frac{\partial x_N}{\partial u_{N-1}} =  h \partial_u f(x_{N-1}, u_{N-1})
\end{equation}
Thus, from \eqref{eq:1inLemma1} and \eqref{eq:lastGradproof}, it follows that the statement of the lemma holds for $k = N-1$.

\underline{Case $k=N-2$}

From \eqref{eq:FE4xdot=f} we have
\begin{align}
\frac{\partial x_N}{\partial u_{N-2}} &=  \frac{\partial x_{N-1}}{\partial u_{N-2}} +  h \partial_x f(x_{N-1}, u_{N-1}) \frac{\partial x_{N-1}}{\partial u_{N-2}} \nonumber \\
&=  \frac{\partial x_{N-1}}{\partial u_{N-2}}\Big( 1 +  h \partial_x f(x_{N-1}, u_{N-1}) \Big) \nonumber \\
& =h \partial_u f(x_{N-2}, u_{N-2}) \Big( 1 +  h \partial_x f(x_{N-1}, u_{N-1}) \Big) \label{eq:sad-1}
\end{align}
Substituting \eqref{eq:sad-1} in \eqref{eq:1inLemma1} we get,
\begin{align}
\frac{\partial E^N}{\partial u_{N-2}} &= h \partial_u f(x_{N-2}, u_{N-2})\Big( \lambda_{N-1} +  h \lambda_{N-1}\partial_x f(x_{N-1}, u_{N-1}) \Big) \nonumber \\
&=  h \lambda_{N-2} \partial_u f(x_{N-2}, u_{N-2}) \label{eq:happy-1}
\end{align}
where the last equality in \eqref{eq:happy-1} follows from \eqref{eq:adjoint-disc}.

The cases for $k = (N-3), (N-2), \ldots, 0$ all follow similarly with additional steps in chain rule and composition.
\end{proof}
\begin{remark}
Lemma 1 requires that the adjoint equation be discretized by the backward Euler formula; see the statement preceding \eqref{eq:adjoint-disc}.
\end{remark}
\begin{lemma}  The gradient of the cost function is equal to the gradient of the Hamiltonian scaled by the integration step size $h$:
\begin{equation}\label{eq:gradE=hgradH}
\frac{\partial E^N}{\partial U^N} = h \left[
                                  \begin{array}{c}
                                    \partial_u H(\lambda_0, x_0, u_0)  \\
                                    \vdots \\
                                     \partial_u H(\lambda_{N-2}, x_{N-2} u_{N-2}) \\
                                     \partial_u H(\lambda_{N-1}, x_{N-1}, u_{N-1}) \\
                                  \end{array}
                                \right]
\end{equation}
\end{lemma}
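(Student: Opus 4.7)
The plan is to reduce Lemma 2 to a direct corollary of Lemma 1 by recognizing that $\lambda_k \,\partial_u f(x_k,u_k)$ is exactly $\partial_u H(\lambda_k, x_k, u_k)$ from the definition \eqref{eq:H:=} of the Pontryagin Hamiltonian. Concretely, I would first invoke Lemma 1 componentwise to get, for each $k \in \{0,1,\ldots,N-1\}$,
\begin{equation*}
\frac{\partial E^N}{\partial u_k} = h\,\lambda_k\,\partial_u f(x_k,u_k).
\end{equation*}
Then I would differentiate $H(\lambda,x,u) = \lambda f(x,u)$ with respect to $u$ at the point $(\lambda_k, x_k, u_k)$ to obtain $\partial_u H(\lambda_k, x_k, u_k) = \lambda_k\,\partial_u f(x_k,u_k)$, since $\lambda$ does not depend on $u$ in the Hamiltonian.

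The second step is simply to assemble the $N$ scalar equalities into the vector identity stated in the lemma. The gradient $\partial E^N/\partial U^N$ is by definition the column vector whose $k$th entry is $\partial E^N/\partial u_k$, so stacking the componentwise relations and factoring out $h$ yields exactly \eqref{eq:gradE=hgradH}.

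There is no real obstacle here; the only thing to be careful about is the indexing convention. Because the forward Euler discretization produces $U^N = (u_0, u_1, \ldots, u_{N-1})$ (see Remark \ref{rem:nouN}), the last component of the gradient is evaluated at $(\lambda_{N-1}, x_{N-1}, u_{N-1})$ rather than at a nonexistent index $N$, and the backward Euler discretization of the adjoint consistently produces $\lambda_0,\ldots,\lambda_{N-1}$, so the evaluation points in \eqref{eq:gradE=hgradH} are all well defined. Once this bookkeeping is acknowledged, the lemma follows in one line from Lemma 1 and the definition of $H$.
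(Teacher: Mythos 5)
Your proposal is correct and follows exactly the route the paper takes: the paper's own proof of Lemma~2 is the one-line observation that the result follows from Lemma~1 together with the definition \eqref{eq:H:=} of the Hamiltonian, which is precisely your argument spelled out with the indexing bookkeeping made explicit. Nothing further is needed.
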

\begin{proof}
This result follow readily from Lemma 1 and \eqref{eq:H:=}.
\end{proof}
%
\begin{proposition}\label{prop:shoot=equiv}
A direct shooting method is first-order equivalent to an indirect method.
\end{proposition}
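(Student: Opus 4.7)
The plan is to exhibit a one-to-one correspondence between the first-order stationarity condition of the direct shooting problem $(Q^N)$ and the entire algebraic system $(P^{\lambda N})$. Because the simulation function in Step~1 of a direct shooting method enforces the forward Euler dynamics $x_{k+1} = x_k + h f(x_k, u_k)$ at every iteration, the first block of equations in $(P^{\lambda N})$ is satisfied automatically by construction. All remaining equations in $(P^{\lambda N})$ must be produced, implicitly, by Step~2.

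First I would write down the unconstrained first-order necessary condition for $(Q^N)$: any stationary point of $E(x_N(U^N))$ must satisfy
\begin{equation}
\frac{\partial E^N}{\partial U^N} = 0.
\end{equation}
Invoking Lemma~2, this single vector equation is equivalent to
\begin{equation}
\partial_u H(\lambda_k, x_k, u_k) = 0, \qquad k = 0, 1, \ldots, N-1,
\end{equation}
which is precisely the discretized Hamiltonian minimization block \eqref{eq:gradH-disc} at the bottom of $(P^{\lambda N})$.

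Next I would point out that Lemma~2 rests on Lemma~1, whose proof constructs the auxiliary quantities $\lambda_k$ by the backward Euler adjoint recursion \eqref{eq:adjoint-disc} seeded by the terminal value $\lambda_{N-1} = \partial_x E(x_N)$ from \eqref{eq:tvc-psi-disc}. In other words, the adjoint dynamics and the transversality condition are built into the very chain-rule computation that any direct shooting gradient must perform. Consequently, whenever $\partial E^N/\partial U^N = 0$ holds, the triple $(x_k, \lambda_k, u_k)$ extracted from this computation satisfies every equation of $(P^{\lambda N})$; conversely, any solution of $(P^{\lambda N})$ yields a stationary point of $(Q^N)$ via Lemma~2. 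This two-way implication is the promised first-order equivalence.

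The main obstacle is conceptual rather than computational: a direct shooting method never explicitly names a costate variable, so one must be careful in stating what ``adjoint equation is satisfied'' actually means. The cleanest resolution is to \emph{define} the $\lambda_k$ through the backward recursion of Lemma~1 and then recognize that these are the identical multipliers appearing in the indirect formulation. With this identification the proposition reduces to a one-line invocation of Lemma~2. I would close by remarking that ``first-order equivalence'' is precisely the right qualifier: both frameworks capture only stationarity, and a second-order comparison (e.g., a strengthened Legendre--Clebsch test at the discrete level) would require an additional Hessian-level analysis beyond the scope of this statement.
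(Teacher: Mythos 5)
Your argument is correct, and it leans on the same two lemmas as the paper, but it frames the equivalence differently. You prove an equivalence of \emph{stationary points}: the forward-Euler block of $(P^{\lambda N})$ holds by construction of the simulation function, and $\partial E^N/\partial U^N=0$ is, via Lemma~2 (with $h>0$ and the $\lambda_k$ defined by the backward recursion \eqref{eq:adjoint-disc} seeded by \eqref{eq:tvc-psi-disc}), equivalent to the Hamiltonian block of \eqref{eq:probPpsiN}, so the solution sets of $(Q^N)$'s first-order condition and of $(P^{\lambda N})$ coincide. The paper's proof makes a different, per-iteration claim: it unpacks the mere \emph{evaluation} of the gradient in \eqref{eq:gradE=hgradH} and shows that this computation is, step by step, the classical indirect-method procedure (set $\lambda_{N-1}=\partial_x E(x_N)$, back-propagate the costates, evaluate $\partial_u H$ at each grid point, scale by $h$) --- an equivalence that holds at every iterate of the shooting algorithm, not only at stationarity. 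Your stationary-point version is essentially what the paper establishes separately in Section~V via \eqref{eq:Fermat4QN} and \eqref{eq:Hu_k=0}, and it has the virtue of being a clean two-way statement; the paper's operational version is what carries the later practical consequences (e.g., the $\epsilon/h$ degradation in Theorem~\ref{theorem:1} and the step-size rescaling by $h$), since those depend on the gradient identity away from exact stationarity, where your formulation is silent. Your definitional care with the unnamed costates (defining $\lambda_k$ through the chain-rule recursion and identifying them with the indirect multipliers) is exactly the right resolution and matches the paper's implicit use of \eqref{eq:adjoint-disc} inside Lemma~1.
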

\begin{proof}
Consider Step 2 of a direct shooting method.  In particular, consider the computation of the gradient of $E^N$.  From Lemmas 1 and 2, it follows that the computation of the last row of the left hand side of  \eqref{eq:gradE=hgradH} is mathematically equivalent to the following steps:
\begin{enumerate}
\item Compute the gradient of the cost function $\partial_x E(x_N)$.  Set it equal to  $\lambda_{N-1}$.  This equality follows from the transversality condition given by \eqref{eq:tvc-psi-disc}.
\item Using the result from Step 1, compute the gradient of the Hamiltonian at the grid point $t_{N-1}$ according to the formula,
$$  \partial_u H(\lambda_{N-1}, x_{N-1}, u_{N-1}) \leftarrow \lambda_{N-1} \partial_u f(x_{N-1}, u_{N-1})$$
\item Multiply the result from Step 2 by the integration step size $h$.
\end{enumerate}
By similar arguments, it follows that computation of the remainder of the rows of the left hand side of \eqref{eq:gradE=hgradH} is equivalent to the following steps:
\begin{enumerate}
\item Evaluate the gradient of the cost function $\partial_x E(x_N)$ and set it equal to  $\lambda_{N-1}$ (Cf.~\eqref{eq:tvc-psi-disc}).
\item Using the result from Step~1, back-propagate the costate using the adjoint equation (Cf.~\eqref{eq:adjoint-disc}) to determine $\lambda_{N-2}, \ldots, \lambda_0$.
\item Using \eqref{eq:H:=}, evaluate the gradient of the Hamiltonian at grid points $t_{N-2}, \ldots, t_0$.
\item Multiply the result from Step 3 by $h$ to compute the gradient of the cost function.
\end{enumerate}
The preceding steps are, in fact, the classical steps of an indirect  method.
\end{proof}
\begin{remark}
Equation \eqref{eq:gradE=hgradH} is a mathematically factual statement.  It does not necessarily imply a formula to generate the left-hand side of \eqref{eq:gradE=hgradH} via its right-hand side as used in the proof of Proposition~1. The left-hand side of \eqref{eq:gradE=hgradH} can be generated independently; for instance, via numerical finite differencing as in the case of POST2\cite{POST2ref}.  Regardless of how the left-hand side of \eqref{eq:gradE=hgradH} is produced, Lemma~2 and hence Proposition~1 holds.
\end{remark}

\section{New Results on Convergence and Accuracy}
Proposition \ref{prop:shoot=equiv} casts serious doubts on a number of historical and anecdotal claims on the convergence and accuracy of direct and indirect methods. In order to place these claims in a proper context, we define the following notions of algorithms and convergence.
\begin{definition}[Inner Algorithm and Its Convergence]
Let $\mathcal{A}$ be an algorithm for solving Problem $(P^N)$.  That is, for a fixed $N$, $\mathcal{A}$ generates a sequence of vector pairs $(X^N_0, U^N_0), (X^N_1, U^N_1), \ldots $ by the iterative map $(X^N_{i+1}, U^N_{i+1}) = \mathcal{A}(X^N_i, U^N_i), \ i \in \mathbb{N}$.  Algorithm $\mathcal{A}$ is said to converge if
$$\lim_{n \to \infty} X^N_n := X^N_\infty , \qquad \lim_{n \to \infty} U^N_n  := U^N_\infty $$
is an accumulation point that solves Problem $(P^N)$. Furthermore, $\mathcal{A}$ is called an inner algorithm for Problem~$(P)$.
\end{definition}
\begin{definition}[Convergence of a Discretization]
Let $(X^N_\infty, U^N_\infty)$ be a solution to Problem~$(P^N)$.  A discretization is said to converge if
$$\lim_{N \to \infty} X^N_\infty := X^\infty_\infty , \qquad \lim_{N \to \infty} U^N_\infty := U^\infty_\infty $$
is an accumulation point that solves Problem $(P)$.
\end{definition}
\begin{definition}[Trajectory Optimization Algorithm]
Let $\mathcal{B}$ be an algorithm that generates a sequence of integer pairs $(N_0, m_0), (N_1, m_1) \cdots $ such that the sequence
$$(X^{N_0}_{m_0}, U^{N_0}_{m_0}), (X^{N_1}_{m_1}, U^{N_1}_{m_1}), \ldots  $$
converges to an accumulation point $(X^\infty_\infty, U^\infty_\infty)$ given by Definition~2. Then, $\mathcal{B}$ is called a trajectory optimization algorithm for solving Problem $(P)$.
\end{definition}
Note that the preceding definitions are agnostic to a direct shooting method.  Also, for the purposes of brevity, we have taken the liberty of avoiding the specifics of a metric for convergence in Definitions 1, 2 and 3.
\begin{remark}
Algorithm $\mathcal{B}$ may be designed using $\mathcal{A}$. In this design, it may be advantageous to have $\mathcal{A}$ depend upon $N$; i.e., different algorithms for different choices of $N$.  For example, the inner algorithm for $N = N_0$ may need to be robust rather than fast while for $N \ne N_0$, it may be advantageous to design $\mathcal{A}$ that is fast rather than robust.  Furthermore, it is possible to design $\mathcal{B}$ with $\mathcal{A}$ not even executed through its completion as implied in Defintion~3.  Although it is not a direct method, the spectral algorithm\cite{spec-alg,PSReview-ARC-2012,arb-grid} implemented in DIDO\cite{dido} incorporates such a variable inner algorithm to support its guess-free execution\cite{guess-free} via an algorithm $\mathcal{B}$ as given by Definition~3.
\end{remark}

\subsection{Theoretical Analysis}
Consider the convergence of a shooting method that uses a gradient-based algorithm. The convergence of such algorithms can be better understood by considering \eqref{eq:family-disc} to be a forward Euler discretization of the ``continuous-time'' dynamical equation given by,
\begin{equation}\label{eq:grad-cont}
\frac{dU^N}{d\tau} = - M^{-1} \frac{\partial E^N}{\partial U^N}
\end{equation}
That is, it is apparent that discretizing \eqref{eq:grad-cont} by a forward Euler method using a step size $\alpha_i$ generates \eqref{eq:family-disc}.  Consequently, the family of algorithms given by \eqref{eq:family-disc} is equivalent to a simple forward propagation of \eqref{eq:grad-cont}; i.e., an initial value problem.  Thus, the convergence of \eqref{eq:family-disc} can be separated into two clear subproblems:
\begin{enumerate}
\item Stability of the differential equation given by \eqref{eq:grad-cont}, and
\item Stability of the forward-Euler discretization of \eqref{eq:grad-cont}.
\end{enumerate}
If the dynamical system given by \eqref{eq:grad-cont} is unstable, then its corresponding algorithm will diverge no matter the step size. Thus, a necessary condition for the convergence of an inner algorithm (see Definition~1) is that \eqref{eq:grad-cont} be stable.  Consequently, the full power of dynamical system theory can be brought to bear in analyzing the stability of \eqref{eq:grad-cont}.
\begin{remark}
\textbf{Analyzing the stability of \eqref{eq:grad-cont} is an open area of research}.  This is because $E^N$ is a composite function (see Remark \ref{rem:EN-notation}) that depends on the data functions $E$ and $f$ as well as the type of discretization used in generating $E^N$. Furthermore, if $E^N$ is generated using an adaptive grid, then it is also a function of the grid itself.
\end{remark}
%
Now suppose that \eqref{eq:grad-cont} is stable; then, the  ``continuous trajectory of the algorithm'' $\tau \mapsto U^N$ converges to some equilibrium point $U^N_\infty$. By definition, this equilibrium point satisfies the condition $dU^N/d\tau =~0$; hence, by \eqref{eq:grad-cont}, if $M^{-1}$ is nonsingular, $U^N_\infty$ satisfies the algebraic equation,
\begin{equation}\label{eq:Fermat4QN}
\left.\frac{\partial E^N}{\partial U^N}\right|_{U^N_\infty} =0
\end{equation}
Equation \eqref{eq:Fermat4QN} is precisely a first-order necessary condition for Problem~$(Q^N)$ which is the optimization problem in Step~2 of the basic direct shooting method; see \eqref{eq:probQN}.
\begin{remark}
If Step 2 of a direct shooting method is implemented to solve Problem $(Q^N_C)$ defined in \eqref{eq:probQNC}, then the equation corresponding to \eqref{eq:Fermat4QN} is given by
\begin{equation}\label{eq:Fermat4QNC}
\left.\frac{\partial E^N}{\partial C}\right|_{C_\infty} =0
\end{equation}
Equation \eqref{eq:Fermat4QNC} does not necessarily imply \eqref{eq:Fermat4QN} unless $\partial_C U^N$ has full rank; see \eqref{eq:gradEwrtC}. In other words, to avoid spurious solutions, the control must be parameterized such that $\partial_{c_i} u(t) $ must generate a full rank condition for $\partial_C U^N$.
\end{remark}

The preceding discussion illustrates how the convergence of algorithms can be analyzed by using well-established theorems related to the stability of differential equations.  The use of differential equations to analyze convergence of algorithms is not entirely new; it goes as far back as 1958 to the pioneering work of Gavurin\cite{gavurin}.   Recently, these ideas have reemerged as a new field for designing and analyzing optimization algorithms\cite{rossJCAM-1,su,polyak-ode,shi:hi-res-Nes,ross-accel}.

Suppose that the inner gradient-based algorithm converges to the equilibrium point $U^N_\infty$; then, by Lemma 2 and \eqref{eq:Fermat4QN} we have,
\begin{equation}\label{eq:Hu_k=0}
\partial_u H(\lambda_k, x_k, u_k) = 0 \quad \text{at } U^N_\infty \text{ for } k = 0, 1, \ldots, (N-1)
\end{equation}
Thus, if the direct shooting method converges (in the sense of Definition 1), it converges to a point that satisfies the discretized ``indirect'' first-order necessary conditions given by $(P^{\lambda N})$; i.e., the same point as an indirect method.  If the discretization converges (see Definition~2), then a solution from the direct shooting method will converge to a solution of $(P^\lambda)$.  \textbf{\emph{In this theoretical context, the accuracy of a direct shooting method is the same as an indirect method}}.

\subsection{Practical Limits}
Despite the favorable theoretical results of the preceding paragraphs, practical experiences can be dramatically different based on their implementations.  This is because practical algorithms satisfy convergence conditions only up to some small tolerances; hence, the theoretical results must be reanalyzed in this context.  To this end suppose that \eqref{eq:Fermat4QN} is satisfied to within some $\epsilon > 0$; i.e.,
\begin{equation}\label{eq:Fermat=eps}
0 < \left|\frac{\partial E^N}{\partial U^N}\right|_\infty \le \epsilon
\end{equation}
From Lemma 2, it follows that \eqref{eq:Fermat=eps} maps to
\begin{equation}\label{eq:Hu_k=epsbyh}
\left|\partial_u H(\lambda_k, x_k, u_k)\right| \le \epsilon/h \quad \text{ for } k = 0, 1, \ldots, (N-1)
\end{equation}
Equations \eqref{eq:Fermat=eps} and \eqref{eq:Hu_k=epsbyh} generates the following theorem that summarizes the conditions and interplay between convergence and accuracy:
\begin{theorem}\label{theorem:1}
Suppose that the first-order optimality condition given by \eqref{eq:Fermat4QN} is satisfied to a tolerance of $\epsilon > 0$ (Cf.~\eqref{eq:Fermat=eps}).  Then, the first-order Hamiltonian optimality condition (Cf.~\eqref{eq:gradH-disc}) is satisfied to a tolerance of $\epsilon/h$, where $h > 0$ is the integration step size.
\end{theorem}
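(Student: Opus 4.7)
The plan is to observe that Theorem \ref{theorem:1} is essentially the componentwise restatement of Lemma~2 under an inexact stationarity hypothesis, so the proof reduces to carefully translating the $\infty$-norm bound on $\partial E^N/\partial U^N$ into an $\ell_\infty$ bound on the Hamiltonian gradient at each grid point. I would begin by writing out the hypothesis \eqref{eq:Fermat=eps} componentwise: because $|\cdot|_\infty$ dominates every coordinate, we have
\begin{equation*}
\left| \frac{\partial E^N}{\partial u_k} \right| \le \epsilon \quad \text{for each } k = 0, 1, \ldots, N-1.
\end{equation*}

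Next I would invoke Lemma~2, which supplies the identity $\partial E^N/\partial u_k = h\, \partial_u H(\lambda_k, x_k, u_k)$, where the costates $\lambda_0, \ldots, \lambda_{N-1}$ are the ones produced by the backward Euler recursion \eqref{eq:adjoint-disc} seeded by the transversality condition \eqref{eq:tvc-psi-disc} evaluated at the current $(X^N, U^N)$. Substituting this identity into the displayed componentwise bound and dividing through by $h > 0$ yields
\begin{equation*}
\left| \partial_u H(\lambda_k, x_k, u_k) \right| \;\le\; \frac{\epsilon}{h} \quad \text{for each } k = 0, 1, \ldots, N-1,
\end{equation*}
which is precisely the $\epsilon/h$-tolerance version of the discretized first-order Hamiltonian minimization condition \eqref{eq:gradH-disc}, as claimed.

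I do not expect any genuine obstacle here, since the theorem is a direct corollary of Lemma~2 once one agrees to interpret ``satisfied to a tolerance of $\delta$'' as an $\ell_\infty$ residual of size at most $\delta$. The one point worth stressing (and what I would flag as the most easily mis-handled step) is that the equality from Lemma~2 is only an equality when $\lambda_k$ is defined via the backward recursion consistent with the current iterate; otherwise the Hamiltonian gradient on the right-hand side is not even a meaningful object. Provided that the implementation of the shooting method computes (explicitly or implicitly, e.g.\ via reverse-mode automatic differentiation) those same costates, the equivalence is exact and the $1/h$ amplification of the tolerance is unavoidable, which is precisely the qualitative message the theorem is designed to convey.
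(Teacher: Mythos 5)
Your proposal is correct and follows essentially the same route as the paper: the paper also obtains the theorem directly from Lemma~2 by mapping the $\epsilon$-bound in \eqref{eq:Fermat=eps} componentwise to the bound \eqref{eq:Hu_k=epsbyh} on $\left|\partial_u H(\lambda_k, x_k, u_k)\right|$ after dividing by $h$. Your added remark that the $\lambda_k$ must be the costates generated by the backward recursion \eqref{eq:adjoint-disc} seeded by \eqref{eq:tvc-psi-disc} is a correct and worthwhile clarification, consistent with the paper's own caveat that Lemma~1 requires the backward Euler discretization of the adjoint equation.
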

\begin{proof}
The proof of this theorem follows as a direct consequence of \eqref{eq:Fermat=eps} and \eqref{eq:Hu_k=epsbyh}.
\end{proof}

\section{Resetting Some Facts and Myths About Direct and Indirect Methods}\label{sec:reset}
Historically, direct and indirect methods have been viewed as separate computational methods with pros and cons in terms of convergence, accuracy, required human-labor etc.\cite{vonStryk-survey,betts-survey}. Proposition~\ref{prop:shoot=equiv} and Theorem~\ref{theorem:1} suggest otherwise.  Interestingly, Proposition~1 and Theorem~1 can also help explain the past computational discrepancies between direct and indirect methods. These points are  summarized in the following subsections.

\subsection{Accuracy}
Classical direct methods have been considered inaccurate with respect to an indirect method. This ``fact'' can be easily explained via Theorem~1.   For example, if a step size of $h = 0.01$ is used for integration, then the accuracy of the first-order Hamiltonian minimization condition is given by $\epsilon/h = 100 \epsilon$, or one-hundred times worse than the tolerance on the gradient of the cost function! In fact, as the integration accuracy is increased (i.e., as $h \to 0$ for convergence in the sense of Definition~2), the practical satisfaction of the first-order Hamiltonian optimality condition given by \eqref{eq:Hu_k=epsbyh} worsens because $\epsilon/h \to~\infty$ for a fixed value of $\epsilon$.  This explains why a direct shooting method has been considered to be inaccurate with respect to optimality. One simple remedy for this problem is to coordinate the practical convergence of the algorithm with that of the discretization in accordance with Theorem~1; i.e., vary $\epsilon$ in concert with $h$ so that $\epsilon/h$ is as small as desired.

\subsection{Rate of Convergence}
Indirect methods are considered to be superior to direct methods in terms of their rates of convergence.  This perception can be explained as follows: From Lemma 2 and \eqref{eq:family-disc}, the inner gradient-based algorithm corresponding to a direct shooting method can be written as,
\begin{equation}\label{eq:step-size-reduce}
U^N_{i+1} = U^N_i - \alpha_i\ h\ M^{-1}\left(\frac{\partial H}{\partial U^N_i}\right), \quad i = 0, 1, \ldots
\end{equation}
Hence, if a classical direct method is implemented with a step size of say, $\alpha_i = 1$, it follows from \eqref{eq:step-size-reduce} that this is equivalent to implementing an indirect method with a step size of $h$.  This fact explains the observation that as the integration step size is reduced to enhance accuracy, the observed convergence of a classical direct method drops quite dramatically (assuming $h < 1$).
An obvious remedy for this problem is adjust the step size of  the gradient algorithm such that $\alpha_i h =1 $.  Alternatively, one could scale the gradient of the direct shooting method by $1/h$; however, we caution that scaling an optimal control problem must be done in a careful manner that ensures that the dual variables do not get imbalanced.  See \cite{scaling} for further details.

\subsection{``Radius''/Region of Convergence}
A common argument against an indirect method is that it has a small ``radius'' of convergence relative to a direct method. This argument seems to be at loggerheads with Proposition~1. Nonetheless, Theorem~1 together with the emerging new theory of optimization\cite{rossJCAM-1} provides the answer. As noted in the discussions following \eqref{eq:grad-cont}, an optimization algorithm may be viewed as a discretization of a differential equation.  A necessary condition for the algorithm to converge is that the Euler step size of its corresponding differential equation be stable.  That is, a smaller step size promotes global convergence, while a larger step size contributes to instability. This is precisely what occurs in the direct shooting method when compared to its indirect counterpart. From \eqref{eq:step-size-reduce}, it is clear that the effective step size of the direct shooting method is less than $\alpha_i$ if $h < 1$; hence, it promotes global convergence at the price of an observed decreased rate of convergence.  A simple remedy to increase the size of the basin of attraction of an indirect method is to reduce the algorithm step size to bring it on par with that of its direct counterpart.

\subsection{Human Labor}
A common dissatisfaction associated with an indirect method is the reported human-labor in arriving at the necessary conditions. A quick examination of $(P^{\lambda N})$ shows that the development of the necessary conditions is equivalent to the computation of the gradients/Jacobians. These computations can be automated to the point of completely eliminating human labor\cite{ross-book,trelat-survey}.  In fact, the gradients/Jacobians need not even be computed symbolically/analytically from the point of view of human-labor intensity.  Furthermore, when a software that purports to be a direct method requires Jacobian information, it is effectively asking a user to provide the first-order necessary conditions. In other words, \textbf{\emph{the historical dissatisfaction of an indirect method in terms of human-labor is more a reflection of an architecture of the software (and its sophistication or lack thereof) rather than the fundamentals of the trajectory optimization method}}.

\subsection{Challenges and Failures of Adaptive Grids}
All of the results presented so far apply to a uniform grid.  Extending the analysis to an arbitrary grid requires new mathematical results that are beyond the scope of this paper.  It is also an open area of research. Nonetheless, it is possible to show rather quickly why an adaptive grid causes failures in a direct shooting method. To this end, let $\pi^N= [t_0, t_1, \ldots, t_N]$ be a nonuniform grid.  Then, $x_N$ not only depends upon $U^N$ but also on $\pi^N$; hence, under the caveat noted in Remark~\ref{rem:EN-notation}, we set
\begin{equation}\label{eq:EN-var-grid}
E^N \equiv E(x_N(U^N, \pi^N))
\end{equation}
Under the same notational caveat, we set,
\begin{equation}\label{eq:piN-adapt}
\pi^N \equiv \pi^N(U^N)
\end{equation}
to denote an adaptive grid; i.e., the grid $\pi^N$ is changed over the course of one or more iteration cycles.  Using \eqref{eq:EN-var-grid} and \eqref{eq:piN-adapt}, the gradient of the endpoint cost function may be written as,
\begin{align}
\frac{\partial E^N}{\partial U^N} &= \frac{\partial E^N}{\partial x_N} \left(\frac{\partial x_N}{\partial U^N}   + \left[\frac{\partial \pi^N}{\partial U^N}\right]^T \frac{\partial x_N}{\partial \pi^N}  \right) \nonumber\\
&= \frac{\partial E^N}{\partial x_N} \frac{\partial x_N}{\partial U^N}   + \underbrace{\frac{\partial E^N}{\partial x_N} \left[\frac{\partial \pi^N}{\partial U^N}\right]^T \frac{\partial x_N}{\partial \pi^N}}_S  \label{eq:gradEN+noise}
\end{align}
where, $S$ symbolizes the second term on the right-hand-side of \eqref{eq:gradEN+noise}. This term is zero in Lemma~1 and Theorem~1. Taking $S$ into account requires specific details of adaptation. Even under an assumption of these details, the computation of Jacobian of $\pi^N$ may not be an easy task.

If $S$ is not taken into account in a direct shooting method that uses a variable step size, then an inner algorithm based solely on the first term of the right-hand-side of \eqref{eq:gradEN+noise} will have the wrong gradient information. This is precisely the source of the problem in shooting methods based on variable step sizes.  In fact, practitioners have long cautioned on the use of variable step size integrators because of ``noise'' in the gradients\cite{betts-survey}.  Equation \eqref{eq:gradEN+noise} provides a more precise statement of ``noise'' in terms of the second term on the right-hand-side of \eqref{eq:gradEN+noise}.

\begin{remark}
Adaptive grids also create new issues such as the introduction of unnecessary dynamics and unaccounted feedback loops that may destabilize the resulting algorithm for solving even a simple problem.  See \cite{scaling} for details.
\end{remark}
%
From \eqref{eq:gradEN+noise} and the analysis presented in \cite{scaling}, it is evident that a better choice for an adaptive grid is to vary it with respect to $N$ as part of algorithm $\mathcal{B}$ while decoupling its variation with respect to the inner algorithm $\mathcal{A}$ (see Definitions 1 and 3).  This point was also observed in \cite{kt-map} as part of the outcomes of numerical experimentations with adaptive pseudospectral grids; however, its cause at that time was unknown.  Evidently, a more rigorous reexamination of several entrenched concepts in trajectory optimization is warranted.

\section{The Case For Hamiltonian Programming}

Based on the analysis presented in Section~\ref{sec:reset}, it is clear that if Theorem~\ref{theorem:1} is violated, then a direct shooting method exhibits all the properties reported in the literature.  A simple remedy to avoid the reported problems is to not violate Theorem~\ref{theorem:1}. There are several ways to comply with Theorem~\ref{theorem:1} some of which are outlined in Section~\ref{sec:reset}.  Any method that complies with Theorem~\ref{theorem:1} incorporates its indirect counterpart; hence, we call such methods Hamiltonian programming (even if a Hamiltonian is not used explicitly).

\subsection{Introduction to Hamiltonian Programming}

We use the term Hamiltonian programming in a much broader sense than complying with the specifics of Theorem~\ref{theorem:1}. To elaborate this point, note that Proposition~\ref{prop:shoot=equiv} is essentially a statement of a covector mapping theorem for a shooting method. That is, Proposition~\ref{prop:shoot=equiv} may be construed as an application of the covector mapping principle (CMP)\cite{ross-book} which basically states that there exists a connection between a direct and an indirect method.  As noted in Ross~et~al.~\cite{scaling}, this connection is a direct consequence of the well-known \emph{Hahn-Banach theorem}\cite{Tao:epsilon}.  To illustrate an application of the Hahn-Banach theorem and the resulting CMP more concretely, consider a linear ordinary differential equation (ODE),
\begin{equation}\label{eq:linear-ode}
\dot\bx = \bA \bx
\end{equation}
We can always associate with \eqref{eq:linear-ode} a ``shadow'' ODE given by,
\begin{equation}\label{eq:adjoint-ode}
-\dot\bpsi = \bA^T \bpsi
\end{equation}
That is, \eqref{eq:adjoint-ode} exists from the mere fact that \eqref{eq:linear-ode} exists.  If \eqref{eq:linear-ode} is nonlinear, then \eqref{eq:adjoint-ode} still exists with $\bA$ replaced by the Jacobian of the right-hand-side of the nonlinear ODE.  This follows from the following ``Hahn-Banach-Hamiltonian algorithm:''
\begin{enumerate}
\item Given an ODE, $\dot\bx = \bff(\bx)$, construct a scalar function $\mathcal{H}$ according to the formula,
\begin{equation}\label{eq:Hamil=}
\mathcal{H}(\bpsi, \bx) := \bpsi^T\bff(\bx)
\end{equation}
where $\bpsi \in \real{N_x}$ is some auxiliary vector that has the same dimension as $\bx \in \real{N_x}$.
\item Construct an auxiliary differential equation using $\mathcal{H}$ from Step~1 by performing the following operation,
\begin{equation}
 -\dot\bpsi := \partial_{\bx} \mathcal{H}(\bpsi, \bx) = \left(\frac{\partial \bff}{\partial\bx}\right)^T \bpsi
\end{equation}
\end{enumerate}
From \eqref{eq:Hamil=}, it follows that the ODE $\dot\bx =\bff(\bx)$ can also be written as $\dot\bx =  \partial_{\psi} \mathcal{H}(\bpsi, \bx)$; hence, any differential equation may be viewed as ``half'' of a Hamiltonian system given by:
\begin{subequations}\label{eq:ode=Hamil}
\begin{align}
\dot\bx &= \partial_{\psi} \mathcal{H}(\bpsi, \bx) &(\Rightarrow \dot\bx = \bff(\bx))\\
-\dot\bpsi &= \partial_{\bx} \mathcal{H}(\bpsi, \bx)&(\text{shadow ODE}) \label{eq:shadow-ode}
\end{align}
\end{subequations}
Alternatively, \textbf{\emph{any differential equation $\dot\bx = \bff(\bx)$ may be Hamiltonianized according to \eqref{eq:ode=Hamil}}}. In this context, Proposition~\ref{prop:shoot=equiv} should not be a surprise in the sense that it is simply providing the ``missing'' details of the other half of the Hamiltonian system associated with a direct shooting method.  This is how a CMP equates a direct method with an equivalent indirect method\cite{ross-book}. Once such a constructive equivalence is established, an algorithm for a direct method can be modified (i.e., \emph{Hamiltonianized}) so that it converges to a solution of its indirect counterpart. \textbf{\emph{Consequently, it is unnecessary, or even improper, to classify trajectory optimization methods as direct or indirect}}.  See also Sec.~VI.D.

\subsection{Hamiltonian vs Nonlinear Programming}
In constructing numerical methods for solving ODEs, one can, in principle, ignore its dual counterpart (i.e., the shadow ODE given by \eqref{eq:shadow-ode}). \emph{\textbf{Ignoring the shadow ODE in computational optimal control is tantamount to ignoring 50\% of the differential equations that define the system! }}In this context, it should not at all be surprising that if a method is constructed that takes into account 100\% of the differential equations it will most certainly perform better than one that accounts for only half the equations. One can go even further and state that if a method is constructed that accounts for only 50\% of its ODEs, then it may even be incorrect.  \emph{A Hamiltonian programming method accounts for 100\% of the ODEs}.

A classical indirect method is obviously Hamiltonian: it ``directly'' accounts for 100\% of the governing ODEs.  A classical direct method only accounts for 50\% of the ODEs. Nonetheless, a direct method can be Hamiltonianized by (``indirectly'') incorporating the missing 50\% of the ODEs (Cf.~Theorem~\ref{theorem:1}). In incorporating these ideas with those presented in Sec.~VI.D, it follows that trajectory optimization is fundamentally Hamiltonian while nonlinear programming is Lagrangian. Thus, the many issues with classical direct methods reported in the literature (see Sec.~\ref{sec:reset}) can now be framed simply as a result of non-Hamiltonian programming.

Under this backdrop, we declare Problem~$(P^N)$ to be a Hamiltonian programming problem rather than a nonlinear programming problem. Such a declaration at the starting point immediately avoids the many pitfalls associated with the preconceptions of nonlinear programming.  In this context, we note the following:
\begin{enumerate}
\item \textbf{\emph{A Hamiltonian is not a Lagrangian}}. Nonlinear programming methods are based on Lagrangians. Hamiltonian programming methods are centered on Hamiltonians.
\item A nonlinear programming method is agnostic to the differences between (discretized) state and control variables. 
    A Hamiltonian programming method treats state and control variables differently and in accordance with their respective dependencies (mapping) to a Hamiltonian.
\item A Hamiltonian programming method treats a dynamic constraint differently than a path constraint. In sharp contrast, collocation-based nonlinear programming methods for optimal control treat all constraints the same way (except to exploit linearity/sparsity).
\item The concept of time and its ``hidden convexity''\cite{boris:hiddenC} is absent in a nonlinear programming method.  For instance, \textbf{\emph{the constancy of a Hamiltonian cannot be derived from the necessary conditions for Problem~$(P^N)$ }} if it treated as a nonlinear programming problem unless each point on the grid $\pi^N$ is considered an optimization variable in an open subinterval.
\end{enumerate}
Defining Problem~$(P^N)$ to be a Hamiltonian programming problem is in line with labeling special mathematical programming problems as subjects deserving their own monikers, theory and algorithms.  Examples are integer programming or linear programming.  Furthermore, even special nonlinear programming problems (e.g. cone and quadratic programming problems) are treated very differently than generic ones.
%
Thus, for example, if a convex optimization problem is solved using generic nonlinear programming methods, it can certainly be successful albeit in a limited sense.  Because the limitations of such an approach does not imply convex programming problems are hard, the same is true when generic nonlinear programming methods are used to solve Problem~$(P^N)$.   Consequently, the more suitable method for solving Problem~$(P^N)$ is to use Theorem~\ref{theorem:1} to expose its hidden Hamiltonian structure and subsequently design \emph{\textbf{Hamiltonian algorithms}} that exploit the Hamiltonian conditions.  As evident from the proofs of Proposition~\ref{prop:shoot=equiv} and Theorem~\ref{theorem:1}, \emph{\textbf{these aspects of mathematical programming are well beyond the apparent sparsity or linearity in viewing $(P^N)$ as a special nonlinear programming problem}}.

Discretizing a generic optimal control problem naturally generates a  matrix-vector Hamiltonian programming problem rather than a ``sparse'' nonlinear programming problem\cite{birk-TN}. Furthermore, the rows of the matrix in this formulation have a separable-programming property\cite{GMSW} that can be exploited for fast and efficient computation\cite{dido}.

\subsection{Growing Literature on Hamiltonian Programming}

A recognition of the fact that discretizing optimal control problems may generate mathematical programming problems that are outside the reach of nonlinear programming theory is not new.  It goes back to the early days of pseudospectral\cite{lncis,acc:hybrid} and ``orthogonal'' collocation methods\cite{biegler2002}.   Collocation methods for optimal control\cite{biegler2002,DynoPC} are similar to pseudospectral knotting methods\cite{knots,auto-knots} although these two methods were developed independently with applications to problems in chemical engineering and aerospace guidance respectively. In the methods advanced by Biegler et al\cite{biegler2002,biegler2014,biegler2016} and Ross et al\cite{arb-grid,dido,acc:hybrid,lncis}, certain Hamiltonian-type conditions are incorporated in generating candidate solutions. Emerging new ideas incorporate additional Hamiltonian conditions in several different ways whose discussion is beyond the scope of this paper; see \cite{ross-book,birk-TN,arb-grid, scaling,biegler2014, biegler2016,hamilMeshRef, marshDAE} for details.  An instantiation of some of these ideas are implemented in the software packages DynoPC\cite{DynoPC} and DIDO\cite{dido} which can reportedly solve ``hard'' optimal control problems that cannot be solved by non-Hamiltonian programming methods\cite{biegler2014,biegler2016,marshDAE}.  In particular, DIDO interfaces with a user similar to the technicalities of a traditional direct method but also outputs all of the additional dual-space information (e.g. Hamiltonian) even though a user does not supply any necessary conditions.  This is because, as noted in Section~VI.D, \textbf{\emph{all of the human labor in the development of necessary conditions can indeed be eliminated by automation}}.  Furthermore, because DIDO implements a sequential Hamiltonian programming algorithm\cite{dido} in the sense of Definition~3, it is robust (in a new Lyapunov sense\cite{rossJCAM-1}) relative to variations in the staring point of the algorithm; i.e., a guess.  Elastic programming\cite{spec-alg,guess-free} furthers the robustness property of DIDO to the extent that it runs without a user-supplied guess\cite{guess-free} while continuing to automatically output necessary Hamiltonian conditions similar to an indirect method.  In other words, \textbf{\emph{DIDO is a ``proof of existence'' of the statement that ``direct = indirect'' methods for trajectory optimization}} provided that the generic nonlinear programming method is replaced or augmented by Hamiltonian programming techniques.

\section{A Brief Historical Perspective on The Direct-Indirect Chasm}
Although Section VI explains the mathematical origins of the direct-indirect divide through a narrow prism of direct shooting methods, it is instructive to view the broader chasm from a historical perspective. To set the stage for a brief historical review, we recall from Section V that an optimization method may be considered as a discretization of a ``continuous-time'' dynamical system whose convergence may be analyzed in terms of Lyapunov stability\cite{rossJCAM-1,polyak-ode}. A trajectory optimization problem generates functions for an intermediate optimization problem (see Definition~1). If a trajectory optimization method that produces these functions generates an unstable dynamical system for optimization, then the resulting inner algorithm will diverge.  For instance, for the basic direct shooting method, properties of the generated function $E^N: U^N \mapsto \Real$ that define Problem $(Q^N)$ (see \eqref{eq:probQN}) drive the attributes of the inner optimization algorithm.  If the continuous-time dynamical system for optimization (Cf.~\eqref{eq:grad-cont}) is stable, then the resulting algorithm may still diverge if the step size of the ``discrete'' algorithm is too large.   \textbf{\emph{An analysis of trajectory optimization methods using these new ideas is an open area of research.}}

Historically, trajectory optimization methods were generated as direct and direct methods based on continuous-time considerations\cite{vonStryk-survey,betts-survey} and not from the perspective of the arguments presented in this paper. For example, it is straightforward to show\cite{brysonHo,kirk} that the first variation $\delta J$ for Problem~$(P)$ can be written as,
\begin{equation}\label{eq:first-var}
\delta J = \int_{t_0}^{t_f} \left(\frac{\partial H}{\partial u} \right)\delta u \, dt
\end{equation}
where $\delta u$ is the variation of $u(\cdot)$ such that all other constraints are satisfied. Hence, if $\delta u$ is selected as,
\begin{equation}\label{eq:fromKirk}
\delta u = - \gamma \left(\frac{\partial H}{\partial u} \right), \quad \gamma > 0
\end{equation}
it follows that \eqref{eq:first-var} reduces to
\begin{equation}
\delta J = -\gamma \int_{t_0}^{t_f} \left(\frac{\partial H}{\partial u} \right)^2 \, dt \quad \Rightarrow\quad  \delta J \le 0
\end{equation}
Thus, $\delta J$ decreases with the selection of \eqref{eq:fromKirk} and vanishes if $\partial_u H$ vanishes.  Consequently, \eqref{eq:fromKirk} generates the variational equivalent of a gradient method which can be discretized to generate a computational method according to,
\begin{equation}
\delta u _k = - \gamma   \left(\frac{\partial H}{\partial u} \right)_k, \quad k \in \mathbb{N}
\end{equation}
Likewise a second-order method may be obtained by pre-multiplying the gradient by an appropriate Hessian\cite{brysonHo}.  Although these ideas are well-founded, they mask the higher-level of granularity offered by \eqref{eq:gradE=hgradH}.  To amplify this statement, consider multiplying both sides of  \eqref{eq:gradE=hgradH} by $\delta u_k$ to generate,
\begin{equation}\label{eq:first-var-disc}
\sum_{k} \frac{\partial E^N}{\partial u_k} \delta u_k  =  \sum_{k} h\, \partial_u H_k \delta u_k
\end{equation}
Equation~\eqref{eq:first-var-disc} may also be viewed as a discrete analog of \eqref{eq:first-var}.  Thus, one can derive \eqref{eq:first-var-disc} by the alternative process of discretizing \eqref{eq:first-var}. Using \eqref{eq:first-var-disc}, one can ``guess'' the result given by \eqref{eq:gradE=hgradH}.  That is, although \eqref{eq:first-var-disc} does not imply \eqref{eq:gradE=hgradH}, the result could have been guessed correctly starting from \eqref{eq:first-var}.  This is precisely the reason why Lemma~2 is a more granular expression of the connection between the first variation and the gradient of the Hamiltonian.  Consequently, if \eqref{eq:gradE=hgradH} were to have been guessed by discretizing \eqref{eq:first-var}, then the problems with the non-Hamiltonian implementation of a classical direct method could have been anticipated (as outlined in Section VI) and the historical chasm between direct and indirect methods might have been averted.  Unfortunately, once the direct-indirect narrative took hold, the ``corporate knowledge'' got passed on through the generations and frequently repeated and cited in subsequent papers as ``fact.''  As noted previously, \textbf{\emph{a re-analysis of some well-known narratives might be warranted in light of the arguments presented in this paper and the emerging new ideas on optimization itself\cite{rossJCAM-1}}}.

As a matter of completeness, we note that a special case of \eqref{eq:first-var-disc} is implied in the textbook by Bryson and Ho\cite{brysonHo}, page~45, Eq.~(2.2.9). This special case corresponding to a ``multi-stage decision process'' governed by the difference equation, $x(k+1) = f(x(k), u(k))$ is equivalent to \eqref{eq:FE4xdot=f} with an Euler step size of $h=1$. Substituting $h=1$ in \eqref{eq:gradE=hgradH}, we get,
\begin{equation}\label{eq:gradE=gradH}
\frac{\partial E^N}{\partial u_k} = \frac{\partial H}{\partial u_k}
\end{equation}
Equation \eqref{eq:gradE=gradH} is noted in passing in Ref.~\cite{brysonHo} (see page~45).
It is apparent that the ramifications of a ``missing'' $h$ in \eqref{eq:gradE=gradH} is nontrivial in the context that \eqref{eq:gradE=hgradH} is able to explain the myths discussed in Section~VI. Nonetheless, because the special case of $h=1$ renders the two gradients in \eqref{eq:gradE=gradH} equal to one another, it takes on an extremely important role as a foundational equation for both back propagation\cite{dreyfusBP} in Deep Learning and automatic differentiation in the so-called ``reverse mode''\cite{autoDiff}.

\section{Conclusions}

Historically, trajectory optimization methods have been classified as either direct or indirect. Under mild assumptions, a direct shooting method is mathematically equivalent to an indirect method up to certain first-order conditions.  The covector mapping principle provides a more general mathematical framework for equating a direct method to some indirect method. \emph{Because the equivalence is fundamental, the purported differences between direct and indirect methods must be relegated to a historical footnote}.

The mathematical equivalence between a direct method and some indirect method does not necessarily translate to computational equivalence if a direct method is implemented without any consideration of the Hamiltonian structure inherent in a trajectory optimization problem. In fact, the mathematical results presented in this paper explain how and why standard nonlinear programming implementations of direct methods can fail in terms of convergence and accuracy. Arguably, this is part of the reason why nonlinear programming methods for solving optimal control problems might indeed be hard. A remedy for such failures is conceptually simple: implement a direct method in a manner that incorporates the results generated by its corresponding indirect elements.  Despite the simplicity of the preceding statement, its practical realization involves a production of a double-infinite sequence of primal vectors that must be coordinated with their dual counterparts such that they converge separately with respect to their discretation and algorithmic maps. \textbf{\emph{In this context, a vast swath of research area in trajectory optimization remains unexplored and is a wide open area of research}}.

The signature trajectory optimization problem considered in this paper is also a fundamental problem in deep learning (sans the regularization term).  If a step size is used as an additional learning parameter, then Theorem~\ref{theorem:1} (and Hamiltonian programming methods) can be exploited for enhancing the accuracy, convergence and computational speed of the learning algorithms.

In recent years, there has been a substantial growth in the number of ostensibly  new trajectory optimization methods and software. A large number of these tools simply rely on patching discretization/integration methods to nonlinear programming (NLP) solvers.  Typically, the NLP solvers are third-party routines that may not allow an insertion of Hamiltonian programming principles.  The purported success achieved by such patched methods is more of a testament to the sophistication of the NLP solvers rather than a new approach to optimizing trajectories.  When the patched tools are unable to solve certain problems, it may not be that these problems are computationally hard; rather, it may be more likely a result of non-Hamiltonian programming.  Hamiltonian programming techniques are able to overcome many of the reported difficulties in solving previously-troublesome optimal control problems. The current challenges in trajectory optimization are in entirely new and unexplored areas of research in theory and computation.





\end{document}